\def\id{{\rm id}}
\newcommand{\bract}{\blacktriangleleft}   %
\newcommand{\LMd}{\mathcal{LM}^\star}
\newcommand{\LM}{\mathcal{LM}}
\newtheorem{proposition}{Proposition}
\newtheorem{corollary}{Corollary}
\newtheorem{theorem}{Theorem}
\newtheorem{lemma}{Lemma}
\theoremstyle{definition}
\newtheorem{definition}{Definition}
\newtheorem{remark}{Remark}
\newenvironment{bemerkung}
  {\pushQED{\qed}\remark}
  {\popQED\endremark}
\newtheorem{example}{Example}
\newenvironment{beispiel}
  {\pushQED{\qed}\example}
  {\popQED\endexample}
\begin{document}
\title{Racks, Leibniz algebras and Yetter-Drinfel'd modules}
\author{Ulrich Kr\"ahmer and Friedrich Wagemann}

\address{University of Glasgow, School 
of Mathematics and Statistics, 
University Gardens, Glasgow
G12 8QW, Scotland}
\email{ulrich.kraehmer@glasgow.ac.uk}

\address{Laboratoire de Math\'ematiques Jean Leray,
Universit\'e de Nantes,
2, rue de La Houssini\`ere, 44322 Nantes cedex 3}
\email{wagemann@math.univ-nantes.fr}

\begin{abstract}
A Hopf algebra object in Loday and Pirashvili's 
category of linear maps
entails an ordinary Hopf algebra and a Yetter-Drinfel'd
module. We equip the latter with
a structure of a braided Leibniz algebra. This
provides a unified framework for examples of racks in
the category of coalgebras discussed recently 
by Carter, Crans, Elhamdadi and Saito.    
\end{abstract}
\maketitle

\tableofcontents

\section{Introduction}
The subject of the present paper is the relation between racks,
Leibniz algebras and Yetter-Drinfel'd modules. 

An augmented rack (or crossed $G$-module) 
can be defined as a Yetter-Drinfel'd
module over a group $G$, viewed as a Hopf algebra
object in
the symmetric monoidal category $({\tt Set},\times)$.  
Explicitly, it is a right $G$-set $X$ together
with a $G$-equivariant map $p : X \rightarrow G$ 
where $G$ carries the right adjoint action of $G$.
A main application of racks 
is the construction of 
invariants of links and tangles, see 
e.g.~\cite{CCES,FenRou,FreYet} and the references therein.

Leibniz algebras 
are vector spaces equipped with a
bracket that satisfies a form of the Jacobi identity, but which
is not necessarily antisymmetric, see
Definition~\ref{defi_leibni} below. 
They 
were discovered by A.M. Blokh \cite{Blo} in 1965, and then later 
rediscovered by J.-L. Loday in his
search of an understanding for the obstruction 
to periodicity in algebraic K-theory \cite{Lod}. 
In this context the problem of the integration of
Leibniz algebras arose, that is, the problem of finding
an object that is to a Leibniz algebra what a Lie group
is to its Lie algebra. Lie racks provide one possible
solution, see \cite{Cov,DheWag,Kin}. 

Analogously to augmented racks over groups, 
the Yetter-Drinfel'd modules $M$ over a Hopf algebra $H$ 
in 
$({\tt Vect},\otimes)$ 
form the Drinfel'd centre of the
monoidal category of right $H$-modules, 
see Section~\ref{section_tetra_to_YD}.
Taking 
in an $H$-tetramodule (bicovariant bimodule) 
$M$ the
invariant elements ${}^\mathrm{inv} M$ with respect to
the left coaction defines an equivalence of categories
between tetramodules and Yetter-Drinfel'd modules.   
Thus they are the coefficients in Gerstenhaber-Schack
cohomology \cite{GerSch}. 
Another application is in the classification of pointed
Hopf algebras, see e.g.~\cite{AndFanGarVen}.

Our aim here is to directly relate Leibniz algebras to
Yetter-Drinfel'd modules, starting from the fact that 
the universal enveloping algebra of a Leibniz algebra       
gives rise to a Hopf algebra object in the category $\LM$ of
linear maps \cite{LodPir}, see Section~\ref{univ_env}.
We extend some results from Wo\-ro\-no\-wicz's
theory of bicovariant differential calculi \cite{Wor} 
which are dual to Hopf algebra objects in $\LM$. 
In particular, we show that one can construct 
braided Leibniz algebras as studied by V.~Lebed
\cite{Leb} by generalising
Woronowicz's quantum Lie
algebras of finite-dimensional bicovariant differential
calculi:

\begin{theorem}\label{main}
Let $f:M\to H$ be a Hopf algebra object in the category 
of linear maps ${\mathcal L}{\mathcal M}$.
Then $f$ restricts to a morphism $\tilde f :\,^{\rm inv}M\rightarrow \mathrm{ker}\,
\varepsilon $ of Yetter-Drinfel'd modules over the Hopf
algebra $H$ and
$$
	x \lhd y=x\tilde f(y)
$$
turns   
$\,^{\rm inv}M$ into a braided Leibniz algebra in the 
category of Yetter-Drinfel'd modules.
\end{theorem}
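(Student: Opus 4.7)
The plan has three stages: first I would ensure that $\tilde f$ is well defined into $\ker\varepsilon$, then verify it is a Yetter--Drinfel'd morphism, and finally verify the braided Leibniz identity.

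For the first stage, I would use the counit axiom: the counit of the Hopf algebra $f:M\to H$ in $\mathcal{LM}$ is a morphism from $f$ to the unit object $(0\to k)$, i.e.\ a commuting square whose bottom arrow is $\varepsilon$, which forces $\varepsilon\circ f=0$ and hence $\tilde f:\,^{\rm inv}M\to\ker\varepsilon$. For the second stage, I would equip $\ker\varepsilon$ with the standard YD structure (right adjoint action $a\cdot h=S(h_{(1)})ah_{(2)}$ and left adjoint coaction $a\mapsto a_{(1)}S(a_{(3)})\otimes a_{(2)}$) and $\,^{\rm inv}M$ with its YD structure from the tetramodule equivalence (right adjoint action $x\cdot h=S(h_{(1)})\blact x\ract h_{(2)}$ and a left coaction obtained from the right $H$-coaction of $M$ via an antipode twist). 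Intertwining of the right actions is immediate from $f$ being an $H$-bimodule map; intertwining of the left coactions uses the characteristic comultiplication identity
$$\Delta f(m)=f(m^{(0)})\otimes m^{(1)}+m_{(-1)}\otimes f(m_{(0)})$$
for a Hopf algebra in $\mathcal{LM}$, which on left invariants reads $\Delta f(m)=f(m^{(0)})\otimes m^{(1)}+1\otimes f(m)$, together with a short Sweedler calculation.

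For the third stage I would write the bracket $[x,y]:=x\lhd y=x\tilde f(y)$, where juxtaposition denotes the right YD action. Associativity of this action and the YD-morphism property yield
$$[[x,y],z]=x\bigl(\tilde f(y)\tilde f(z)\bigr),\qquad [x,[y,z]]=x\bigl(S(\tilde f(z)_{(1)})\tilde f(y)\tilde f(z)_{(2)}\bigr).$$
The main algebraic lever is the identity $ab=\sum b_{(1)}(a\cdot b_{(2)})$ in $H$ (immediate from the antipode axiom applied to $b$), which expresses the difference as
$$[[x,y],z]-[x,[y,z]]=x\sum\bigl(\tilde f(z)_{(1)}-\varepsilon(\tilde f(z)_{(1)})\bigr)\bigl(\tilde f(y)\cdot\tilde f(z)_{(2)}\bigr).$$
Substituting $\Delta\tilde f(z)=f(z^{(0)})\otimes z^{(1)}+1\otimes f(z)$ from the previous stage, the $1\otimes f(z)$ summand is killed by the $\varepsilon$-subtraction and the right-hand side collapses to $\sum[x,z^{(0)}]\cdot\bigl(\tilde f(y)\cdot z^{(1)}\bigr)$. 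Identifying $z^{(0)}\otimes z^{(1)}$ (up to an antipode twist) with the YD coaction of $z$ and using the canonical YD braiding $c(y\otimes z)=z_{(0)}\otimes y\cdot z_{(-1)}$, this is precisely the braided Leibniz correction term of Lebed.

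The hard part will be the Sweedler bookkeeping in this last step: the YD coaction on $\,^{\rm inv}M$ and the YD braiding both involve antipode twists of the right $H$-coaction of $M$, so care is needed to arrange the correction term in exactly the shape of Lebed's definition. The remaining steps reduce to routine unwinding of the axioms of a Hopf algebra in $\mathcal{LM}$ and of the tetramodule/YD-module equivalence.
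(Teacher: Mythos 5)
Your overall strategy is exactly the paper's: the theorem is assembled from (a) $\mathrm{im}\,f\subseteq\mathrm{ker}\,\varepsilon$, (b) $\tilde f$ being a morphism of Yetter--Drinfel'd modules (the paper's Lemma~\ref{universalproperty}), and (c) a direct Sweedler computation of the braided Leibniz identity for $x\lhd y=xq(y)$ with $q$ equivariant and colinear (the paper's Corollary~\ref{korollar}). Your stage~3 is the paper's computation written as a difference: the ``lever'' $ab=b_{(1)}\bigl(S(b_{(2)})ab_{(3)}\bigr)$ is precisely the paper's identity $q(y)h=h_{(1)}q(yh_{(2)})$ read backwards, and your final collapse to $\sum(x\lhd z_{(0)})\lhd(\tilde f(y)\bract z_{(1)})$ is the correction term $(x\lhd z_{\langle 1\rangle})\lhd y_{\langle 2\rangle}$ for the Yetter--Drinfel'd braiding. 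Your stage~1 argument (the counit of the Hopf object in $\LM$ is a morphism to $0:\{0\}\to k$, forcing $\varepsilon\circ f=0$) is a valid and arguably cleaner alternative to the paper's trick of applying $\varepsilon\otimes\varepsilon$ to the coderivation identity.

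The one genuine problem is stage~2, where you misidentify the comodule structures. In this paper everything is a \emph{right} module and \emph{right} comodule, and the coaction on $\mathrm{ker}\,\varepsilon$ is not an adjoint coaction but the reduced coproduct
$\tilde\triangle(a)=a_{(1)}\otimes a_{(2)}-1\otimes a$;
likewise the coaction on ${}^{\rm inv}M$ is the plain restriction of the right $H$-coaction of $M$, with no antipode twist. With the structures as you state them (left adjoint coaction $a\mapsto a_{(1)}S(a_{(3)})\otimes a_{(2)}$, twisted left coaction on ${}^{\rm inv}M$), the colinearity you would verify in stage~2 is not the identity
$\Delta\tilde f(z)=\tilde f(z_{(0)})\otimes z_{(1)}+1\otimes\tilde f(z)$
that stage~3 actually consumes, so as written the two stages do not connect. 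The fix is exactly the paper's Lemma~\ref{universalproperty}(2): restrict the coderivation identity to left-invariant $z$ and subtract $1\otimes f(z)$, which gives colinearity with respect to $\tilde\triangle$ directly; once that is in place the ``antipode-twist bookkeeping'' you worry about at the end disappears, because no twists occur anywhere in the right-right conventions.
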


This allows us 
to study racks and Leibniz algebras in the
same language, which provides in particular a unified 
approach to \cite[Proposition~3.1]{CCES} and
\cite[Proposition~3.5]{CCES}, see
Examples~\ref{example_Leibniz} and~\ref{beispielb} 
at the end of the paper.

The paper is structured as follows: Section~2 recalls
basic facts and definitions about the category $\LM$ of
linear maps and the construction of the universal
enveloping algebra of a Leibniz algebra. In Section~3
we explore analogues in $\LM$ of functors relating groups and
Lie algebras to Hopf algebras, with a
view towards the integration problem of Lie
algebras in $\LM$. In particular we point out that the
linearisation $p : kX \rightarrow kG$ of an augmented
rack $p : X \rightarrow G$ is not a Hopf algebra object in
$\LM$, but instead a map of $kG$-modules and comodules,
see Proposition~\ref{prop_aug_rack}. Section~4 recalls
background on Yetter-Drinfel'd modules over bialgebras.
The main section is Section~5 where we prove Theorem~\ref{main}
and finish by discussing concrete examples.\\ 

\noindent{\bf Acknowledgements:} 
UK and FW thank UC Berkeley where this work took its origin. 
FW furthermore thanks the University of Glasgow where 
this work was finalised. UK is supported by the EPSRC
Grant ``Hopf algebroids and Operads'' and the Polish
Government Grant 2012/06/M/ST1/00169.  

\section{Algebraic objects in $\LM$}  
In this section we recall the neceesary background 
on the category of linear maps, algebraic objects
therein, and the relevance of these for the theory of
Leibniz algebras, mainly from \cite{LodPir,LodPir1}.
Throughout we work with vector spaces over a field $k$,
although the results can be generalised to other
base categories. An unadorned $ \otimes $ denotes the tensor
product over $k$. 

\subsection{The tensor categories 
${\mathcal L}{\mathcal M}$ and $\LMd$}   \label{category_LM}
The following definition goes back to 
Loday and Pirashvili \cite{LodPir}:
  
\begin{definition}
The \emph{category of linear maps} $\LM$ 
has linear maps $f:V\to W$ between vector spaces 
as objects, which are usually depicted by vertical arrows 
with $V$ upstairs and $W$ downstairs. 
A morphism $\phi$ between two linear maps 
$(f:V\to W)$ and $(f':V'\to W')$ is a 
commutative square
$$\xymatrix{ V \ar[r]^{\phi_1} \ar[d]^f & V' \ar[d]^{f'} \\
W \ar[r]^{\phi_0} & W' }$$
The {\it infinitesimal tensor product} 
between $f$ and $f'$ is defined to be 
$$
	\xymatrix{ (V\otimes W')\oplus(W\otimes V') 
	\ar[d]^{f\otimes\id_{W'}+\id_W\otimes f'} \\
	W\otimes W' .}
$$
\end{definition}

The infinitesimal tensor product turns $\LM$ into a
symmetric monoidal category with 
unit object being the zero map $0:\{0\}\to k$. 

\begin{bemerkung}
Alternatively, 
$\LM$ is   
the category of $2$-term chain complexes with a truncated tensor product; one has just omitted the 
terms of degree two in the tensor product of complexes. 
One can analogously define categories 
${\mathcal L}{\mathcal M}_n$ of chain complexes of
length $n$ and a tensor
product which is truncated in degree $n$, so in this
sense 
$\LM=\LM_1$ and ${\tt Vect}=\LM_0$. 
Taking the inverse limit, one passes from
these truncated versions to the category 
of chain complexes with the ordinary tensor product
${\tt Chain}=\LM_\infty$. 
\end{bemerkung}

Interpreting $\LM$ as the category of cochain rather
than chain complexes of length 1 and depicting them
consequently by arrows pointing upwards 
results in a different
monoidal structure $\otimes^\star$ on $\LM$ in which 
$$
	(f:V\to W)\otimes^\star(f':V'\to W')
$$ 
is
given by
$$
	\xymatrix{ (V\otimes W')\oplus(W\otimes V') 
	\\
	V\otimes V' \ar[u]^{\id_V\otimes
	f'+f\otimes\id_{V'}}.}
$$
The resulting tensor category will be denoted
$\LMd$.

\subsection{Algebraic objects in $\LM$}
\label{algebraic_objects}

In a symmetric monoidal tensor category, one can define
associative algebra objects, Lie algebra objects and 
bialgebra objects. 
Loday and Pirashvili exhibit the structure of these 
in the tensor category ${\mathcal L}{\mathcal M}$.
For this, they use that the inclusion functor 
$$
	{\tt Vect}\to{\mathcal L}{\mathcal M},\quad
	W \mapsto (0:\{0\}\to W),
$$ 
and the projection functor 
$$
	\LM \to{\tt Vect},\quad
	(f:V\to W)\mapsto W
$$ 
between the categories of vector spaces ${\tt Vect}$ and 
${\mathcal L}{\mathcal M}$ are tensor functors which
compose to the identity functor on ${\tt Vect}$. 
This shows that for each of the above mentioned algebraic structures
in ${\mathcal L}{\mathcal M}$, the codomain $W$ of $ f: V
\rightarrow W$ inherits the corresponding structure in the category
of vector spaces. The linear map can be used to
turn the vector space $V \oplus W$ into 
an abelian extension of $W$, 
in the sense discussed for example in
\cite[Section~12.3.2]{LodVal}. The domain $V$ becomes 
an abelian ideal in $V \oplus W$.

More explicitly, Loday and Pirashvili 
show that in $\LM$:

\begin{itemize}
\item an associative algebra object 
$f:M\to A$ is the data of an associative algebra $A$, an 
$A$-bimodule $M$ and a bimodule map $f:M\to A$,
\item a Lie algebra object $f:M\to {\mathfrak g}$ is the data of a Lie algebra ${\mathfrak g}$,
a (right) Lie module $M$ and an equivariant map $f:M\to{\mathfrak g}$,
\item a bialgebra object $f:M\to H$ is the data of a bialgebra $H$, 
of an {\it $H$-tetramodule} (or 
\emph{bicovariant bimodule}) $M$, that is,  
an $H$-bimodule and $H$-bicomodule
whose left and right coactions are $H$-bimodule maps, 
and of an $H$-bilinear coderivation 
$f:M\to H$,
\item a Hopf algebra object in ${\mathcal L}{\mathcal M}$ is 
a bialgebra object $f:M\to H$ in ${\mathcal L}{\mathcal M}$ such that $H$ admits an antipode.      
\end{itemize}

\begin{remark}
While Loday and Pirashvili formulate their statement
about Hopf algebra objects in $\LM$ rather as a
definition, see \cite[Seciton~5.1]{LodPir}, these
really are the Hopf algebra objects in $\LM$ in the
categorical sense: it is straightforward to verify
that if $H$ has an antipode $S : H \rightarrow H$, then  
the bialgebra object $f : M \rightarrow
H$ has an antipode given by 
$$
	\xymatrix{ M \ar[r]^T \ar[d]_f & M \ar[d]^f\\ 
	H \ar[r]^S & H}
$$
with $T$ given in Sweedler notation
by $T(x)=-S(m_{(-1)})m_{(0)}S(m_{(1)})$.
Thus $T$ is uniquely determined by the antipode $S$
on $H$ and is not additional data. 
\end{remark}

\begin{bemerkung}\label{fodca}
Dually, a bialgebra object $f:H\to M$ in $\LMd$ consists of a 
bialgebra $H$ in ${\tt Vect}$ and an 
$H$-tetramodule $M$ such that $f$ is a derivation and
bicolinear. 
If 
$M=\mathrm{span}_k\{g f(h)\mid g,h \in H\}$, 
this structure is referred to as a \emph{first
order bicovariant differential
calculus} over $H$ \cite{Wor}, see
e.g.~\cite{KliSch} for a pedagogical account. Linear 
duality 
$F:V\mapsto V^*$ yields a (weakly) monoidal functor
$F : \LM \rightarrow (\LMd)^\mathrm{op}$, which is strongly
monoidal on the subcategory of finite-dimensional
vector spaces. In
Remark~\ref{fodc} below we will describe 
the class of bialgebras in $\LM$ that is under $F$ 
dual to first order bicovariant differential calculi.
\end{bemerkung}

\subsection{Universal enveloping algebras in $\LM$}
\label{univ_env}

Loday and Pirashvili furthermore construct 
in \cite{LodPir} a pair of adjoint functors 
$P$ (primitives) and 
$U$ (universal enveloping algebra) associating 
a Lie algebra object in ${\mathcal L}{\mathcal M}$
to a Hopf algebra object in ${\mathcal L}{\mathcal M}$, 
and vice versa, and prove an analogue of the classical
Milnor-Moore theorem in this context. For a given Lie algebra 
object
$f:M\to{\mathfrak g}$, the enveloping algebra is 
$\phi:U{\mathfrak g}\otimes M\to U{\mathfrak g}$,
$u \otimes m \mapsto uf(m)$.

The underlying $U \mathfrak{g} $-tetramodule structure on $U
\mathfrak{g} \otimes M$ is as follows:
the right $U{\mathfrak g}$-action on $U{\mathfrak g}\otimes M$ is induced by
$$(u\otimes m)\cdot x\,=\,ux\otimes m+u\otimes m\cdot x$$
for all $x\in{\mathfrak g}$, all $u\in U{\mathfrak g}$
and all $m\in M$.  
The left action is by multiplication on the 
left-hand factor. The left and right $U \mathfrak{g} $-coactions 
are given by the
coproduct on the left-hand factor, that is, for $x \in
\mathfrak{g} , m \in M$ they are 
$$
	(x \otimes m) \mapsto 
	1 \otimes (x \otimes m)+x \otimes (1 \otimes
m),\quad
	(x \otimes m) \mapsto 
	(1 \otimes m) \otimes x+
	(x \otimes m) \otimes 1.
$$

\subsection{Leibniz algebras}\label{laibniz}
We finally recall 
from \cite{LodPir} that 
a particular class of Lie algebra objects in $\LM$ 
arises in a canonical way from Leibniz algebras: 

\begin{definition}\label{defi_leibni}
A $k$-vector space ${\mathfrak g}$ together with a bilinear map 
$$
	[,]:{\mathfrak g}\times{\mathfrak g}\to{\mathfrak g}
$$
is called a (right) 
\emph{Leibniz algebra}, in case for all $x,y,z\in{\mathfrak g}$
$$[[x,y],z]\,=\,[x,[y,z]]+[[x,z],y]$$
holds.
\end{definition} 

In particular, any Lie algebra is a Leibniz algebra.
Conversely, for any Leibniz algebra $ \mathfrak{g} $  
the quotient by the Leibniz ideal generated 
by the squares $[x,x]$ for $x\in{\mathfrak g}$ is a 
Lie algebra $ \mathfrak{g} _\mathrm{Lie}$, and 
the right adjoint action of ${\mathfrak g}_{\rm Lie}$ on 
itself lifts to a well-defined right action on 
${\mathfrak g}$. So by construction, 
the canonical quotient map 
$ \pi : \mathfrak{g} \rightarrow
\mathfrak{g}_\mathrm{Lie}$  
is a Lie algebra object in $\LM$.
The universal enveloping algebra of $ \mathfrak{g} $ 
as defined in \cite{LodPir1} is exactly the abelian
extension of the associative algebra 
$U \mathfrak{g}_\mathrm{Lie}$ in ${\tt Vect}$ that is 
defined by the universal enveloping algebra 
$U(\mathfrak{g} \rightarrow
\mathfrak{g}_\mathrm{Lie})$, 
see \cite[Theorem~4.7]{LodPir}.

\section{The problem of integrating Lie algebras in 
${\mathcal L}{\mathcal M}$}

In this section we discuss the direct analogues in
$\LM$ of
some functorial constructions that relate
groups to Lie algebras, with 
a view towards the problem of integrating Leibniz
algebras to some global structure. 
Augmented racks and their linearisations are one
possible framework for these, so we end by recalling
some background on racks.       

\subsection{From Lie algebras to groups}
 
Consider the following diagram of functors:

\hspace{4.5cm}
\xymatrix{
{\tt Lie} \ar[r]^{U} \ar[d] & {\tt ccHopf}
\ar[d]^{-^\circ} \\ 
{\tt Grp} 
& {\tt cHopf} \ar[l]^{\chi}}
\vspace{.5cm}

Here ${\tt Lie}$ is the category of Lie algebras over the field $k$, 
${\tt Grp}$ is the category of groups, 
${\tt Hopf}$ is the category of $k$-Hopf algebras, and 
${\tt ccHopf}$ and ${\tt cHopf}$ are its subcategories
of cocommutative respectively
commutative Hopf algebras. 
The functor $U$ is that of the enveloping
algebra, and $\chi$ is the functor of
characters,
while $H^\circ$ is the Hopf dual of a Hopf algebra $H$,
that is, the Hopf algebra of matrix coefficients of
finite-dimensional representations, see
e.g.~\cite{KliSch,Mon}.

An affine algebraic group $G$ over an
algebraically closed field $k$ of characteristic 0
can be
recovered in this way from its Lie algebra 
$\mathfrak{g}:=\mathrm{Lie}(G) $
as $ \chi (U \mathfrak{g}^\circ)$ provided 
$G$ is perfect, i.e.~$G=[G,G]$. More generally,
if $G$ has unipotent radical, then $G$ is isomorphic
to the characters on the subalgebra of basic
representative functions on $U \mathfrak{g} $,    
see \cite{Hoch} for details.

\subsection{Characters of Hopf algebra objects in ${\mathcal
L}{\mathcal M}$} 
The functor $\chi(-)$ (characters) can be extended to
Hopf algebra objects 
in ${\mathcal L}{\mathcal M}$, hence one might attempt
to use it to integrate Lie algebras in $\LM$ and in
particular Leibniz algebras. 
By definition, a character $\chi$ of a Hopf algebra object $f:M\to H$ 
is an algebra morphism in ${\mathcal L}{\mathcal M}$ from $f:M\to H$ to the unit of the tensor category 
${\mathcal L}{\mathcal M}$ which is simply $0:\{0\}\to k$. This amounts to a commutative diagram 
$$\xymatrix{ M \ar[r]^{\chi_1} \ar[d]^f & \{0\} \ar[d]^{0} \\
H \ar[r]^{\chi_0} & k .}$$
One therefore obtains just characters $\chi_0$ of $H$, because $\chi_1$ is supposed to be the zero map.
The same applies to Hopf algebra objects in $\LMd$, that is,
the component of the character associated to the tetramodule vanishes. 
Thus we have:

\begin{proposition}
The functor $\chi(-)$ (characters), applied to a Hopf object in 
$\LM$ or $\LMd$, results
just in characters of the underlying Hopf algebra $H$. 
\end{proposition}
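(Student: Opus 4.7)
The plan is to unpack what a character means in each tensor category, and to observe that the $M$-component of the square must vanish for dimensional reasons, so that the only remaining datum is an honest character of the underlying Hopf algebra $H$. The preceding paragraph already sketches this for $\LM$; I will make both cases precise in parallel.

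First I would identify the unit object of $\LMd$. For $\LM$ it is given as $0 : \{0\} \to k$. For $\LMd$ a direct computation from the formula for $\otimes^\star$ shows that the unit must be $0 : k \to \{0\}$: tensoring an arbitrary object $f : V \to W$ with it yields
$$V \otimes k \;\longrightarrow\; (V \otimes \{0\}) \oplus (W \otimes k),$$
which canonically reduces to $f : V \to W$ after collapsing the $\{0\}$-summand and using $-\otimes k \cong \mathrm{id}$.

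Next I would write down what a character $\chi$ of a Hopf algebra object is in each case. In $\LM$, $\chi$ is a commutative square from $f : M \to H$ to the unit $0 : \{0\} \to k$, and the map $\chi_1 : M \to \{0\}$ is forced to be zero since its codomain is zero. In $\LMd$, dually, $\chi$ is a commutative square from $f : H \to M$ to $0 : k \to \{0\}$, and again the map $\chi_1 : M \to \{0\}$ is forced to be zero. In both cases the commutativity of the square is automatic on the $M$-level, since both sides of the corresponding equation vanish.

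Finally, I would verify that the algebra-morphism condition on $\chi$ reduces to $\chi_0 : H \to k$ being a classical algebra character of $H$. The multiplication on $f : M \to H$ in $\LM$ assembles $\mu_H$ with the $H$-bimodule actions on $M$, and imposing multiplicativity on $\chi$ splits into an $H$-component, which is exactly $\chi_0(xy) = \chi_0(x)\chi_0(y)$, and an $M$-component, which is automatically satisfied because $\chi_1 = 0$ and the target unit has zero structural map. Compatibility with units yields $\chi_0(1) = 1$. The $\LMd$ case runs in parallel with arrows reversed. No step poses a substantive obstacle; the only care required is correctly identifying the unit of $\LMd$ and keeping the roles of $M$ and $H$ straight in each of the two cases.
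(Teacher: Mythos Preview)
Your proposal is correct and follows essentially the same approach as the paper: the paper's argument is the paragraph immediately preceding the proposition, which observes that a character is a morphism to the unit object, so the $M$-component $\chi_1$ lands in $\{0\}$ and is forced to vanish, leaving only a character $\chi_0$ of $H$. You simply spell this out in more detail---in particular you explicitly identify the unit of $\LMd$ as $0:k\to\{0\}$ and verify that the multiplicativity condition reduces to $\chi_0$ being an algebra map---whereas the paper leaves these as immediate from the definitions.
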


Hence the integration of Lie algebra objects 
in ${\mathcal L}{\mathcal M}$
(and thus in particular Leibniz algebras) along the lines 
outlined in the previous section must fail. 
One can associate to a Lie algebra object 
in ${\mathcal L}{\mathcal M}$ its universal 
enveloping algebra, and then by 
duality some commutative Hopf algebra object in  
$\LMd$, but characters of this object will always be only 
characters of the underlying Hopf algebra.

\subsection{Formal group laws in $\LM$}
Another approach to the integration of Lie algebras is
that of formal group laws, see \cite{Ser}. Here one
studies a continuous dual of $U \mathfrak{g}$. 

Recall that a {\it formal group law} on a vector space $V$ is
a linear map $F:S(V\oplus V)\to V$ which is unital and associative, i.e.
its extension to a coalgebra morphism $F':S(V)\otimes S(V)\to S(V)$
is an associative product on the symmetric algebra $S(V)$. 

Mostovoy \cite{Mos} transposes this definition into the realm of $\LM$. 
Namely, a formal group law in $\LM$ is a map 
$$G:S\big((V\oplus V)\to(W\oplus W)\big)\to(V\to W),$$
whose extension to a morphism of coalgebra objects
$$G':S(V\to W)\otimes S(V\to W)\to(V\to W)$$
is an algebra object in $\LM$. 
Starting with a Lie algebra object 
$M\to{\mathfrak g}$ 
 in $\LM$, the product in the universal enveloping algebra 
$U(M\to{\mathfrak g})$
composed with the projection onto the primitive 
subspace yields a formal group law
using the identification of $U(M\to{\mathfrak g})$ with $S(M\to{\mathfrak g})$ 
provided by the analogue of the 
Poincar\'e-Birkhoff-Witt theorem for Lie algebra objects in
$\LM$. Explicitly, one gets a diagram

\hspace{1.5cm}
\xymatrix{
S({\mathfrak g})\otimes M\otimes S({\mathfrak g})\oplus S({\mathfrak g})\otimes
S({\mathfrak g})\otimes M \ar[r]^{\hspace{3.3cm} G^1+G^2} \ar[d] & M \ar[d] \\ 
S({\mathfrak g})\otimes S({\mathfrak g}) \ar[r]^F & {\mathfrak g}  }
\vspace{.5cm}

Mostovoy \cite{Mos} shows then: 

\begin{proposition}
The functor that assigns to a Lie algebra object $M\to{\mathfrak g}$ in 
$\LM$ the primitive part of the product in $U(M\to{\mathfrak g})$
is an equivalence of categories of Lie algebra objects in $\LM$ and of 
formal group laws in $\LM$.
\end{proposition}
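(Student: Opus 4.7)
The plan is to mimic, level by level, the classical proof that in characteristic zero the functor $\mathfrak{g}\mapsto\mathrm{BCH}_\mathfrak{g}$ from Lie algebras to formal group laws is an equivalence, upgrading each ingredient from $\tt Vect$ to $\LM$.

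First I would set up the PBW map for a Lie algebra object $(f:M\to\mathfrak{g})$ in $\LM$: the classical PBW theorem in $\LM$ (proved by Loday--Pirashvili as the Milnor--Moore type statement in Section~\ref{univ_env}) provides a coalgebra-object isomorphism $U(M\to\mathfrak{g})\cong S(M\to\mathfrak{g})$, where the symmetric algebra is formed with respect to the infinitesimal tensor product. Transporting the multiplication of $U$ across this isomorphism and post-composing with the projection onto primitives produces the map $G$ displayed above; here $G^1$ and $G^2$ are the two components corresponding to the two summands of $S(\mathfrak{g})\otimes M\otimes S(\mathfrak{g})\oplus S(\mathfrak{g})\otimes S(\mathfrak{g})\otimes M$, which reflect the two ways $M$ sits inside a tensor power of the object $M\to\mathfrak{g}$. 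That this $G$ is coassociative-compatible, i.e.\ extends to a morphism of coalgebra objects whose corresponding product on $S(M\to\mathfrak{g})$ is an algebra object in $\LM$, follows formally from associativity of the multiplication in $U(M\to\mathfrak{g})$.

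Next I would construct the inverse functor. Given a formal group law $G$ in $\LM$, restrict its extension $G'$ to the degree-$(1,1)$ part of $S(V\to W)\otimes S(V\to W)$ and antisymmetrise in order to recover a bracket. Concretely, in the bottom row one recovers the classical bracket on $\mathfrak{g}$ from $F$, while the top row gives the action $\mathfrak{g}\otimes M\to M$ together with the map $M\otimes\mathfrak{g}\to M$ from the two components $G^1,G^2$; the compatibility encoded in the commuting square of the formal group law in $\LM$ translates, upon inspection of associativity modulo terms of degree $\geq 3$, exactly into the right-Leibniz identity required for $M\to\mathfrak{g}$ to be a Lie algebra object in $\LM$ (equivariance of $f$ and the Lie-module axiom).

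Finally I would verify that the two constructions are mutually inverse. In one direction this is the classical computation that the Baker--Campbell--Hausdorff series truncated to quadratic terms recovers the Lie bracket, performed componentwise in the $V$- and $W$-strands of $\LM$. In the other, one uses that a formal group law is determined by its primitive-valued expansion together with associativity, so reconstructing $U(M\to\mathfrak{g})$ via the symmetric coalgebra and the recovered product returns the original $G$. Naturality in both directions is automatic from functoriality of $U$ and of the projection to primitives.

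The main obstacle I expect is the inverse direction: extracting from the formal group law $G$ the two-sided module/bracket data on $M\to\mathfrak{g}$ and showing that the right-Leibniz identity (rather than merely Jacobi for $\mathfrak{g}$) is forced by associativity of $G'$ in $\LM$. The subtlety is that in $\LM$ one has the infinitesimal tensor product, so the "degree two" part of $S(M\to\mathfrak{g})\otimes S(M\to\mathfrak{g})$ has an asymmetric structure in the $V$- and $W$-components, and one must carefully bookkeep which summands contribute to which equivariance axiom. Once this is tracked correctly, the equivalence follows by a routine graded argument, so the substance of the proof is precisely this $\LM$-enhanced PBW plus BCH bookkeeping.
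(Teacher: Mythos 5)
You should first be aware that the paper does not prove this proposition at all: it is stated as a result of Mostovoy and attributed to \cite{Mos} (``Mostovoy shows then:''), so there is no internal argument to compare yours against. Your overall strategy --- PBW in $\LM$ to identify $U(M\to\mathfrak{g})$ with $S(M\to\mathfrak{g})$ as coalgebra objects, take the primitive part of the transported product to get $G$, and invert by reading off the quadratic component of a formal group law --- is the expected one and is essentially how the cited reference proceeds, so the approach is not wrong.

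That said, as it stands your text is an outline rather than a proof, and two gaps are genuine. First, everything hinges on characteristic zero: the symmetrisation map $S(\mathfrak{g})\to U\mathfrak{g}$ being a coalgebra isomorphism, the identification of primitives of $S(V)$ with $V$, and any appeal to Baker--Campbell--Hausdorff all require $\mathrm{char}\,k=0$, and the classical equivalence between Lie algebras and formal group laws already fails in positive characteristic; you never state this hypothesis, and the paper's blanket ``vector spaces over a field $k$'' does not supply it. Second, the entire mathematical content of the proposition is concentrated in the steps you defer: that coassociativity of $G'$, truncated modulo degree $\geq 3$ with respect to the infinitesimal tensor product, forces exactly the right $\mathfrak{g}$-module axiom on $M$ and the equivariance of $f:M\to\mathfrak{g}$ (and nothing more), and that the two constructions are mutually inverse on the $M$-strand, where the asymmetry of the summands $S(\mathfrak{g})\otimes M\otimes S(\mathfrak{g})$ and $S(\mathfrak{g})\otimes S(\mathfrak{g})\otimes M$ makes the bookkeeping nontrivial. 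You correctly identify this as the main obstacle, but identifying an obstacle is not the same as overcoming it; a complete write-up would have to carry out the degree-$(1,1)$ and degree-$(2,1)$ computations explicitly, or else simply cite \cite{Mos} as the paper does.
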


An interesting problem that
arises 
is to specify what this framework gives 
for the Lie algebra objects in $\LM$ coming from a Leibniz algebra, i.e. 
for those of the form $\pi : {\mathfrak g}\to{\mathfrak g}_{\rm Lie}$.
Furthermore, one should clarify 
what the global objects associated to these
formal group laws are.    
The results in the present paper are meant to motivate
why augmented racks are a natural candidate,
by going the other way  
and studying the Hopf algebra objects in $\LM$ that are obtained by
linearisation from augmented racks.

\subsection{Augmented racks}

The set-theoretical version of ${\mathcal L}{\mathcal M}$ is the 
category ${\mathcal M}$ of all maps $X\to Y$ between
sets $X$ and $Y$. One defines an analogue of the
infinitesimal tensor product in which 
the disjoint union of sets takes the place of the sum 
of vector spaces, and the cartesian product replaces the tensor 
product. This defines a monoidal category structure on
$\mathcal{M}$ with 
unit object $\emptyset\to\{*\}$. However, the latter is not 
terminal in ${\mathcal M}$, thus one cannot define inverses, 
and a fortiori group objects.

One way around this ``no-go'' argument is to 
consider augmented racks:

\begin{definition}
Let $X$ be a set together with a binary operation denoted $(x,y)\mapsto x\lhd y$
such that for all $y\in X$, the map $x\mapsto x\lhd y$ is bijective and
for all $x,y,z\in X$,
$$(x\lhd y)\lhd z\,=\,(x\lhd z)\lhd(y\lhd z).$$
Then we call $X$ a (right) \emph{rack}. In case the invertibility of the maps 
$x\mapsto x\lhd y$ is not required, it is called a
\emph{shelf}. 
\end{definition}

The guiding example of a rack is a group together with its conjugation map 
$(g,h)\mapsto g\lhd h:=h^{-1}gh$. 
Augmented racks are generalisations of these in which
the rack operation results from a group action: 

\begin{definition}
Let $G$ be a group and $X$ be a (right) $G$-set. 
Then a map $p:X\to G$ is called an \emph{augmented
rack} in case 
$p$ satisfies the augmentation identity, i.e. for all
$g\in G$ and all $x\in X$
\begin{equation}\label{augmentationidentity}
	p(x\cdot g)\,=\,g^{-1}\,p(x)\,g.	
\end{equation}
\end{definition}

In other words $p$ is equivariant with respect to the $G$-action on $X$ 
and the adjoint action of $G$ on itself. 
The $G$-set $X$ in an augmented rack $p:X\to G$ 
carries a canonical structure of a rack by setting
$$x\lhd y\,:=\,x\cdot p(y).$$ 

\begin{bemerkung}\label{associated_group}
Any rack $X$ can be turned into an augmented rack as
follows: let $\mathrm{As}(X)$ be the \emph{associated
group} (see for example \cite{FenRou}) of $X$, which is
the quotient of the free group on the set $X$ by the
relations $y^{-1}xy=x \lhd y$ for all $x,y \in X$.
Then there is a canonical map $p : X \rightarrow
\mathrm{As}(X)$ assigning to $x \in X$ the class of $x$
in $\mathrm{As}(X)$ which turns $X$ into an augmented
rack.   
\end{bemerkung}

A more conceptual point of view goes back to Yetter,
confer \cite{FreYet}: a group is the same as a Hopf
algebra object in the symmetric monoidal category ${\tt Set}$
with $\times$ as monoidal structure. In this sense, 
right $G$-modules 
are just right $G$-sets while right $G$-comodules are
just sets $X$ equipped with a map $p : X \rightarrow
G$. The augmentation identity
(\ref{augmentationidentity}) becomes the
Yetter-Drinfel'd condition that we will discuss in
detail in the next section. Thus augmented racks are
the same as Yetter-Drinfel'd modules over $G$ in ${\tt
Set}$, or, in other words, the category of augmented
racks over $G$ is the Drinfel'd centre of the category
of right $G$-sets.       

\subsection{Linearised augmented racks}
By linearisation, one obtains the group algebra $kG$ of  
a group $G$ which consequently is a Hopf algebra in 
${\tt Vect}$, see e.g.~\cite[p.51, Example~2]{Kas}.  
Hence one might ask whether a linearisation
of an augmented rack $p : X \rightarrow G$ defines a
Hopf algebra object in $\LM$.
The functor  $k-$ 
($k$-linearisation of a set) sends $p:X\to G$ to a 
linear map $p:kX\to kG$.
Consider $kX$ as a $kG$-bimodule where $kG$ acts on $kX$ on the 
right via the given action 
and on the left via the trivial action. 
Consider further the two linear maps 
$$
	\triangle_l : kX \rightarrow kG \otimes kX,\quad
	\triangle_r : kX \rightarrow kX \otimes kG
$$
given for $x \in X$ by
$$
\triangle_l x\,=\,p(x)\otimes x\,\,\,\,{\rm and}\,\,\,\,\,
		\triangle_r x\,=\,x\otimes p(x).
$$ 
  
Then we have: 
   
\begin{proposition}  \label{prop_aug_rack}
The maps $ \triangle_l,\triangle_r$ turn 
$kX$ into a $kG$-bicomodule
such that $p:kX\to kG$ is a morphism of bicomodules and
bimodules, where $kG$ carries the left and right
coaction given by the coproduct, the trivial left
action, and the adjoint right action.     
\end{proposition}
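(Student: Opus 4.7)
The plan is to verify five items in turn: that $\triangle_l$ is a left $kG$-coaction, that $\triangle_r$ is a right $kG$-coaction, that the two commute to give $kX$ the structure of a bicomodule, that $p$ is bicolinear, and that $p$ is bilinear for the specified bimodule structures. Since every piece of data is defined on the basis $X$ of $kX$ and extended by linearity, it suffices to verify each identity on basis elements $x \in X$, and in each case this reduces to a one-line calculation.

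The central observation behind items one through four is that for every $x \in X$ the element $p(x) \in G \subset kG$ is group-like, i.e.\ $\Delta(p(x)) = p(x) \otimes p(x)$ and $\varepsilon(p(x)) = 1$. Coassociativity $(\Delta \otimes \id) \triangle_l = (\id \otimes \triangle_l) \triangle_l$ and counitality $(\varepsilon \otimes \id) \triangle_l = \id$ then follow at once, both sides reducing to $p(x) \otimes p(x) \otimes x$ and to $x$ respectively on a basis element $x$; the analogous checks for $\triangle_r$ are symmetric. For the bicomodule compatibility one computes that both $(\triangle_l \otimes \id) \triangle_r(x)$ and $(\id \otimes \triangle_r) \triangle_l(x)$ equal $p(x) \otimes x \otimes p(x)$. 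Bicolinearity of $p$ is again just a restatement of group-likeness: $(p \otimes \id) \triangle_r(x) = p(x) \otimes p(x) = \Delta(p(x))$, and symmetrically on the left.

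The only step carrying any real content is the right $kG$-module compatibility of $p$, which demands $p(x \cdot g) = g^{-1} p(x) g$ for all $x \in X$ and $g \in G$, the right-hand side being the adjoint action of $g$ on $p(x)$. This is precisely the augmentation identity \eqref{augmentationidentity}; conversely, one sees that the proposition is essentially the observation that this identity is exactly the condition turning $p$ into a morphism of right $kG$-modules with respect to the adjoint action on $kG$. The left-module compatibility is vacuous since both left actions factor through $\varepsilon$, so for $g \in G$ one has $g \cdot x = x$ and $g \cdot p(x) = p(x)$, and equality is immediate.

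I would not expect a genuine obstacle. The proof is almost entirely dictated by unpacking definitions, the single substantive identity needed is part of the hypothesis that $p$ is an augmented rack, and everything else is formal once one records that $p(x)$ is group-like in $kG$. The proposition is thus best viewed as a reformulation: the structure of an augmented rack on $p : X \to G$ is \emph{precisely} the data making the linearisation $p : kX \to kG$ a morphism of $kG$-bimodules and $kG$-bicomodules of the stated form.
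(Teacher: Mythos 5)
Your proof is correct and follows essentially the same route as the paper's: the augmentation identity gives the bimodule compatibility, and the group-likeness of $p(x)$ in $kG$ makes the coaction axioms and the bicolinearity of $p$ immediate. The paper's proof is terser (it only records the bimodule step and the two bicolinearity identities, leaving the coassociativity, counitality and bicomodule-compatibility checks implicit), so your version simply spells out the same argument in full.
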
     
 
\begin{proof}
The augmentation identity
$$
	p(x\cdot g)\,=\,g^{-1}p(x)g,\quad \forall x \in X,g
\in G
$$
shows that $p$ is a morphism of bimodules. 
We have 
$$(p\otimes 1)(\triangle_r x)\,=\,p(x)\otimes p(x)\,\,\,\,{\rm and}\,\,\,\,\,
(1\otimes p)(\triangle_l x)\,=\,p(x)\otimes p(x)$$ 
for all $x\in X$, thus $p$ is a morphism of bicomodules.
\end{proof}

In particular,  
$p : kX \rightarrow kG$ is not a Hopf algebra object in $\LM$ in
general.

\subsection{Regular functions on augmented racks}
Taking  
the coordinate ring $k[X]$ of an algebraic set $X$ 
is a contravariant functor, so applying it to an
algebraic
augmented rack $p:X\to G$ gives rise to an algebra map 
$p^* : k[G] \rightarrow k[X]$ which is most naturally
considered in $\LMd$.

  
The right $G$-action on $X$ induces a right $k[G]$-comodule
structure on $k[X]$. Together with the trivial left
comodule structure, $k[X]$
becomes a $k[G]$-bicomodule. On $k[G]$ itself, we consider the 
bicomodule structure
obtained from the trivial left coaction and 
the right adjoint coaction given in
Sweedler notation by $f \mapsto f_{(2)} \otimes
S(f_{(1)}) f_{(3)}$, and then obtain:

\begin{proposition}
$p^*:k[G]\to k[X]$ is a morphism of bimodules and bicomodules. 
\end{proposition}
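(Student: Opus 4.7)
The plan is to realise this proposition as the direct dualisation of Proposition~\ref{prop_aug_rack} under the contravariant functor $X \mapsto k[X]$. I would begin by making explicit the bimodule structures, which the statement leaves implicit: on $k[G]$ the left and right $k[G]$-actions are just the algebra multiplication, while on $k[X]$ both actions factor through $p^{*}$, i.e.~$g \cdot \varphi := p^{*}(g)\,\varphi$ and $\varphi \cdot g := \varphi\, p^{*}(g)$. These are precisely the linear duals of the coproduct coactions of $kG$ on itself and of the coactions $\triangle_{l}, \triangle_{r}$ of $kG$ on $kX$ from Proposition~\ref{prop_aug_rack}.

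With these conventions in hand, the bimodule compatibility of $p^{*}$ is immediate from the fact that $p^{*}$ is an algebra morphism: for $g,h \in k[G]$ and $f \in k[G]$,
\[
	p^{*}(g f h) \;=\; p^{*}(g)\, p^{*}(f)\, p^{*}(h) \;=\; g \cdot p^{*}(f) \cdot h .
\]
Since the left coactions are trivial on both sides, the left bicomodule compatibility is also automatic, so the entire content of the bicomodule claim is the right-coaction identity
\[
	(p^{*} \otimes \id) \circ \triangle^{\mathrm{ad}} \;=\; \triangle_{k[X]} \circ p^{*},
\]
where $\triangle^{\mathrm{ad}}(f) = f_{(2)} \otimes S(f_{(1)}) f_{(3)}$ and $\triangle_{k[X]}$ is the right coaction induced by the $G$-action on $X$.

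I would verify this identity pointwise: evaluating both sides at an arbitrary pair $(x,g) \in X \times G$ and using that $\triangle^{\mathrm{ad}}(f)(a,b) = f(b^{-1} a b)$ (a direct unwinding of the Sweedler expression) and $\triangle_{k[X]}(\varphi)(x,g) = \varphi(x \cdot g)$, the two sides reduce respectively to $f(g^{-1} p(x) g)$ and $f(p(x \cdot g))$, which coincide precisely by the augmentation identity~(\ref{augmentationidentity}). No step presents a genuine obstacle: the argument is the termwise dual of the proof of Proposition~\ref{prop_aug_rack}, and the right adjoint coaction on $k[G]$ is designed exactly so as to encode the augmentation identity as a comodule-compatibility condition. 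The only minor caveat is that for a non-finite algebraic group $G$ the coactions take values in a suitable completed tensor product, but this does not affect the verification carried out on representative functions.
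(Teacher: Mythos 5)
Your proposal is correct and follows essentially the same route as the paper: dualising the equivariance diagram of Proposition~\ref{prop_aug_rack} under $k[-]$, so that the right-comodule compatibility of $p^{*}$ becomes exactly the augmentation identity~(\ref{augmentationidentity}) evaluated pointwise, with the left coactions trivial. You are in fact slightly more complete than the paper's own proof, which leaves the bimodule structures and the (immediate) bimodule compatibility implicit, whereas you spell out that they are the duals of the coactions $\triangle_{l},\triangle_{r}$ and follow from $p^{*}$ being an algebra map.
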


\begin{proof}
For the augmented rack $p:X\to G$, we have the following commutative diagram:

\hspace{4.5cm}
\xymatrix{
X\times G \ar[r] \ar[d]^{p\times{\rm id}_G} & X \ar[d]^p\\ 
G\times G \ar[r] & G}
\vspace{.5cm}

which reads explicitly as

\hspace{3.5cm}
\xymatrix{
(x,g) \ar[r] \ar[d]^{p\times{\rm id}_G} & x\cdot g \ar[d]^p\\ 
(p(x),g) \ar[r] & p(x\cdot g)\,=\,g^{-1}p(x)g}
\vspace{.5cm}

Applying the functor $k[-]$ to this diagram yields

\hspace{4.5cm}
\xymatrix{
k[X] \ar[r]  & k[X]\otimes k[G] \\ 
k[G] \ar[u]^{p^*} \ar[r] & k[G]\otimes k[G]  \ar[u]^{p^*\otimes{\rm id}_{k[G]}}  }
\vspace{.5cm}

This means exactly that $p^*$ is a morphism of right 
comodules. As the left coactions on $k[G]$ and $k[X]$
are trivial, it is a map of bicomodules.    
\end{proof}

\subsection{The Yetter-Drinfel'd braiding}

It is well-known (see for example \cite{Kas} p. 319) that the category of 
augmented racks over a fixed group $G$ carries a braiding:

\begin{proposition} \label{braiding_augm_racks}
Define for augmented racks $p_1:X\to G$ and $p_2:Y\to G$ with respect to a fixed group $G$ 
their tensor product $X\otimes Y$ by
$X\times Y$ with the action $(x,y)\cdot g\,:=\,(x\cdot g,y\cdot g)$ and the equivariant map $p:X\times Y
\to G$ being $p(x,y):=p_1(x)p_2(y)$. Then the formula
$$c_{X,Y}:X\otimes Y\to Y\otimes X,\,\,\,\,\,\,c_{X,Y}(x,y)\,:=\,(y,x\cdot p(y))$$
defines a braiding on the category of augmented racks over $G$. 
\end{proposition}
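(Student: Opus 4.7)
The plan is to verify the three requirements of a braiding for the family $\{c_{X,Y}\}$: each map is a morphism in the category of augmented racks over $G$; the family is natural and consists of isomorphisms; and the two hexagon diagrams commute. All three checks reduce to the augmentation identity (\ref{augmentationidentity}) applied at the right spot, together with the fact that $G$ acts on each object by bijections.

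For the first point, writing $p_X$ and $p_Y$ for the augmentations of $X$ and $Y$ and $p$ for the resulting augmentation on $X\otimes Y$, the $G$-equivariance of $c_{X,Y}$ is the computation
$$c_{X,Y}\big((x,y)\cdot g\big)=\big(y\cdot g,\ (x\cdot g)\cdot p_Y(y\cdot g)\big)=\big(y\cdot g,\ x\cdot p_Y(y)\cdot g\big)=c_{X,Y}(x,y)\cdot g,$$
where the middle step invokes $p_Y(y\cdot g)=g^{-1}p_Y(y)g$ and the action axiom. Compatibility with the augmentations is
$$p\big(c_{X,Y}(x,y)\big)=p_Y(y)\,p_X\big(x\cdot p_Y(y)\big)=p_Y(y)\,p_Y(y)^{-1}p_X(x)\,p_Y(y)=p_X(x)p_Y(y)=p(x,y),$$
invoking now the augmentation identity for $p_X$. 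Invertibility is given explicitly by the formula $c_{X,Y}^{-1}(y,x)=(x\cdot p_Y(y)^{-1},\,y)$, well-defined since the $G$-action is by bijections, and naturality in $X$ and $Y$ is immediate because morphisms of augmented racks over $G$ are equivariant and intertwine the augmentations by definition.

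The remaining task is the two hexagon identities, which I would just unfold on elements. On $(x,y,z)\in X\otimes Y\otimes Z$, both routes of the first hexagon produce $(z,\,x\cdot p_Z(z),\,y\cdot p_Z(z))$, using the diagonal $G$-action on $X\otimes Y$; both routes of the second hexagon produce $(y,\,z,\,x\cdot p_Y(y)\,p_Z(z))$, using that the tensor product augmentation is multiplicative, $p_{Y\otimes Z}(y,z)=p_Y(y)p_Z(z)$. There is no real obstacle here; the only nuisance is keeping careful track of the order in which group elements act on the right. Conceptually, the statement is just an instance of the Yetter-Drinfel'd braiding in the symmetric monoidal category $({\tt Set},\times)$ associated to the Hopf algebra object $G$, which explains in hindsight why the augmentation identity is precisely what is needed to verify each axiom.
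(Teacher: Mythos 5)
Your verification is correct in every detail: the equivariance and augmentation-compatibility checks, the explicit inverse $c_{X,Y}^{-1}(y,x)=(x\cdot p_Y(y)^{-1},y)$, naturality, and both hexagons all come out as you state, with the augmentation identity (\ref{augmentationidentity}) entering exactly where you invoke it. The paper itself offers no proof of this proposition --- it is quoted as a well-known fact with a reference to Kassel and the remark that it is a special case of the Yetter-Drinfel'd braiding, which is precisely the conceptual observation you make in your closing sentence (it is the braiding of Section~\ref{section_braidings} instantiated in $({\tt Set},\times)$ for the Hopf algebra object $G$). So your element-wise computation supplies the details the paper delegates to the literature, and your final remark recovers the paper's own point of view; the only small addition one might make is to note explicitly that $p(x,y)=p_1(x)p_2(y)$ does satisfy the augmentation identity for the diagonal action, so that $X\otimes Y$ is itself an object of the category --- a one-line check of the same kind as the ones you carry out.
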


This is just a special case of the Yetter-Drinfel'd
braiding that we are going to study in detail next.
 
\section{Yetter-Drinfel'd modules}
In this section we recall definitions and facts
about Yetter-Drinfel'd modules over Hopf algebras in
${\tt Vect}$ that we need. For more information,
the reader is referred to \cite{Kas,KliSch,Maj,Mon}. 
 
\subsection{Yetter-Drinfel'd modules}\label{section_tetra_to_YD}

Let $H=(H,\mu,\eta,\triangle,\varepsilon)$ be a bialgebra over $k$.
To every right module and right comodule $M$ over 
$H$, one functorially associates a bimodule and bicomodule $M^H$ 
over $H$ which is 
$
		H \otimes M
$
as a vector space with left and right action given by
\begin{equation*}
	g(h \otimes x):=gh \otimes x,\quad
	(h \otimes x)g:=hg_{(1)} \otimes xg_{(2)}
\end{equation*}
and left and right coaction given 
in Sweedler notation by
\begin{equation*}
	(h \otimes x)_{(-1)} \otimes 
	(h \otimes x)_{(0)} :=
	h_{(1)} \otimes (h_{(2)} \otimes x),
\end{equation*}
\begin{equation*}
	(h \otimes x)_{(0)} \otimes 
	(h \otimes x)_{(1)} :=
	(h_{(1)} \otimes x_{(0)}) \otimes 
	h_{(2)}x_{(1)}.
\end{equation*}

These coactions and actions are compatible in the sense
that 
$M^H$ is a Hopf
tetramodule if and only if $M$ 
is a Yetter-Drinfel'd module:  

\begin{definition}\label{yd_defi}
A \emph{Yetter-Drinfel'd module} 
over $H$ is a 
right module and right comodule $M$ for which we have 
\begin{equation}\label{ydcondition}
	(xh_{(2)})_{(0)} \otimes h_{(1)}(xh_{(2)})_{(1)}=
	x_{(0)} h_{(1)} \otimes 
	x_{(1)} h_{(2)}
\end{equation}
for all $x \in M$ and $h \in H$.
\end{definition}

\begin{bemerkung}
If $H$ is a Hopf algebra with antipode $S$, then 
the Yetter-Drinfel'd condition (\ref{ydcondition}) 
is easily seen to be equivalent
to 
\begin{equation}\label{ydcondition2}
	(xh)_{(0)} \otimes (xh)_{(1)}=
	x_{(0)} h_{(2)} \otimes 
	S(h_{(1)}) x_{(1)} h_{(3)}.
\end{equation}
\end{bemerkung}

More precisely, $H$ is a Hopf algebra if and only if $M \mapsto M^H$
defines an equivalence between the categories of Yetter-Drinfel'd
modules and that of Hopf tetramodules. In this case, the inverse functor is given by
taking the invariants with respect to the left coaction,
\begin{equation*}
	N \mapsto {}^\mathrm{inv}N:=\{
		x \in N \mid x_{(-1)} \otimes x_{(0)} =
	1 \otimes x\}.
\end{equation*}
This is an equivalence of monoidal categories, where the tensor
product of Hopf tetramodules is $ \otimes_H$.

\begin{example}\label{augmented_rack}
Let $G$ be a group and 
$M$ be a $kG$-Yetter-Drinfel'd module. Then $M$ is in particular a
$kG$-module, i.e. a $G$-module. The comodule structure
of $M$ is a $G$-grading
of this $G$-module:
$$M\,=\,\bigoplus_{g\in G}M_g.$$
The Yetter-Drinfel'd compatibility condition now reads for $u\in kG$ and $m\in M$ 
$$(um)_{(-1)}\otimes(um)_{(0)}\,=\,u_{(1)}m_{(-1)}S(u_{(2)})\otimes u_{(3)}m_{(0)}$$
which means for a group element $g=u\in G$ and a homogeneous element $m\in M_h$
$$(gm)_{(-1)}\otimes(gm)_{(0)}\,=\,ghg^{-1}\otimes g\cdot m.$$
This means that the action of $g \in G$ on $M$ maps $M_h$ 
to  $M_{ghg^{-1}}$.

When the module $M$ is a permutation representation of
$G$, that is, is obtained by linearisation from a
(right) $G$-set $X$, $M \simeq kX$, then $M$ is
Yetter-Drinfel'd precisely when $X$ carries the
structure of an augmented rack. The full subcategory of
the category of all Yetter-Drinfel'd modules over $kG$
of these permutation modules has been studied first by
Freyd and Yetter, see  
\cite[Definition 4.2.3]{FreYet}.
\end{example}

\begin{example}\label{einhuellende}
Recall from Section~\ref{univ_env} that if 
$ f : M \rightarrow \mathfrak{g} $ is any Lie
algebra object in $\LM$, then the universal enveloping algebra
construction in $\LM$ yields the $U \mathfrak{g} $-tetramodule 
$U \mathfrak{g} \otimes M$. In this case, $M$ is
recovered as the Yetter-Drinfel'd module of left
invariant elements, with trivial right coaction and
right action being induced by the right $
\mathfrak{g} $-module structure on $M$.

More generally, every right module over a cocommutative
bialgebra $H$ becomes a Yetter-Drinfel'd module with
respect to the trivial right coaction.   
\end{example}

\subsection{The Yetter-Drinfel'd braiding revisited} \label{section_braidings}
Every right $H$-module and right $H$-comodule $M$ carries a
canonical map 
\begin{equation}\label{ydbraiding}
		\tau : M \otimes M \rightarrow M \otimes M,\quad
		x \otimes y \mapsto y_{(0)} \otimes xy_{(1)}
\end{equation}
The following well-known fact characterises when $ \tau $ is
a braiding:
\begin{proposition}
The map (\ref{ydbraiding}) is a braiding on $M$ if and only if 
$M$ is a Yetter-Drinfel'd module.
\end{proposition}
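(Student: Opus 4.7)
The plan is a direct computation of the braid (Yang-Baxter) equation
$$
(\tau \otimes \id)(\id \otimes \tau)(\tau \otimes \id) \;=\; (\id \otimes \tau)(\tau \otimes \id)(\id \otimes \tau)
$$
on $M^{\otimes 3}$. Applying both sides to a generic element $x \otimes y \otimes z$ and expanding by three successive uses of the formula $\tau(a \otimes b) = b_{(0)} \otimes a b_{(1)}$, and rewriting the iterated right coaction on $z$ via coassociativity as $z_{(0)} \otimes z_{(1)} \otimes z_{(2)}$, one obtains
$$z_{(0)} \otimes y_{(0)} z_{(1)} \otimes x y_{(1)} z_{(2)}$$
on the left and
$$z_{(0)} \otimes (yz_{(2)})_{(0)} \otimes x z_{(1)} (yz_{(2)})_{(1)}$$
on the right.

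Comparing the two expressions, the braid equation becomes equivalent to the identity
$$y_{(0)} z_{(1)} \otimes x y_{(1)} z_{(2)} \;=\; (yz_{(2)})_{(0)} \otimes x z_{(1)} (yz_{(2)})_{(1)}$$
holding in $M \otimes M$ for all $x,y,z \in M$. This is precisely the Yetter-Drinfel'd condition (\ref{ydcondition}) for the pair $y \in M$ and $h \in H$ with $\triangle h = z_{(1)} \otimes z_{(2)}$, pushed forward to $M \otimes M$ along the right-action map $M \otimes H \to M \otimes M$, $v \otimes h \mapsto v \otimes x h$.

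This gives the \emph{if} direction at once: assuming (\ref{ydcondition}) for all $h \in H$, it holds in particular for $h$ coming from iterated coactions on $z \in M$, so the braid equation follows. For the \emph{only if} direction, which I expect to be the main obstacle, one must recover (\ref{ydcondition}) for an \emph{arbitrary} $h \in H$ and not just for $h$ built from coactions on elements of $M$. The resolution is to invoke the extra content that is part of being a braiding in the categorical sense: $\tau$ must intertwine the diagonal right $H$-action on $M \otimes M$, and a direct unwinding of this $H$-linearity produces (\ref{ydcondition}) for every $h \in H$, closing the equivalence.
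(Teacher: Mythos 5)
Your expansion of the two sides of the Yang--Baxter equation is exactly the computation the paper has in mind (the proposition is stated as well known, and the remark immediately after it says that (\ref{ydcondition}) ``is directly the condition that occurs when testing'' the braid relation), and your treatment of the \emph{if} direction is correct: the braid relation amounts to the identity $(yh_{(2)})_{(0)}\otimes h_{(1)}(yh_{(2)})_{(1)}=y_{(0)}h_{(1)}\otimes y_{(1)}h_{(2)}$ evaluated at $h=z_{(1)}$ (summed against $z_{(0)}$) with the second leg then acting on $x$, so it follows from (\ref{ydcondition}).

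The gap is in your repair of the \emph{only if} direction. Right $H$-linearity of $\tau$ for the diagonal action gives
$$(yh_{(2)})_{(0)}\otimes xh_{(1)}(yh_{(2)})_{(1)}\;=\;y_{(0)}h_{(1)}\otimes xy_{(1)}h_{(2)}$$
for all $x,y\in M$ and $h\in H$, which is (\ref{ydcondition}) pushed forward along $m\otimes a\mapsto m\otimes xa$; since this map $M\otimes H\to M\otimes M$ need not be injective for any choice of $x$ (take the trivial action $xh=\varepsilon(h)x$), you cannot strip off the $x$ to recover (\ref{ydcondition}) in $M\otimes H$. Some strengthening beyond the bare braid relation is genuinely required --- for $M=H$ with the regular action and trivial coaction, $\tau$ is the flip and satisfies the braid relation for every $H$, while (\ref{ydcondition}) forces cocommutativity --- but $H$-linearity (and even $H$-colinearity) of the single map $\tau:M\otimes M\to M\otimes M$ is still not enough: for $M=k$ with the trivial action and the coaction given by a non-central grouplike $g$, $\tau$ is the identity of $k\otimes k$, hence an $H$-linear and $H$-colinear solution of the braid equation, yet $M$ is not Yetter--Drinfel'd. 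The converse closes only under the reading the paper itself supplies in the remark following the proposition: $\tau$ is the $N=M$ component of the natural family $c_{N,M}:N\otimes M\to M\otimes N$, $n\otimes m\mapsto m_{(0)}\otimes nm_{(1)}$, of right $H$-module maps indexed by \emph{all} right $H$-modules $N$ (the Drinfel'd centre). Taking $N=H$ and writing out $c_{H,M}\bigl((1\otimes m)h\bigr)=c_{H,M}(1\otimes m)h$ yields (\ref{ydcondition}) verbatim, with no residual $x$ to remove.
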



\begin{bemerkung}
While (\ref{ydcondition2}) is maybe easier to memorise,
(\ref{ydcondition}) makes sense for all bialgebras and is
directly the condition that occurs when testing whether or not 
$ \tau $ satisfies the braid relation. More generally, 
$ \tau $ can be extended to braidings $N \otimes M
\rightarrow M \otimes N$ between any right $H$-module $N$ and a
Yetter-Drinfel'd module $M$, and this identifies the category of
Yetter-Drinfel'd modules with the Drinfel'd centre of the
category of right $H$-modules. 
\end{bemerkung}

\subsection{The Yetter-Drinfel'd module $ \mathrm{ker}\, \varepsilon$}
\label{universal_codifferential}
The following example of a Yetter-Drinfel'd module is
of particular importance to us:

\begin{proposition}
If $H$ is any Hopf algebra, then the kernel 
$ \mathrm{ker}\, \varepsilon $ of its counit is a
Yetter-Drinfel'd module with respect to the right
adjoint action 
$$
	g \bract h:=S(h_{(1)}) g h_{(2)} 
$$
and the right coaction
\begin{equation*}
\tilde{\triangle} : \mathrm{ker}\, \varepsilon \rightarrow 
	\mathrm{ker}\, \varepsilon \otimes H,\quad
	k \mapsto h_{(1)} \otimes h_{(2)} - 1 \otimes h.
\end{equation*}     
\end{proposition}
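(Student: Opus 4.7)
My plan is to realise $\ker\varepsilon$ as a quotient of $H$ itself by the coaugmentation line $k\cdot 1$, equipped with a canonical Yetter-Drinfel'd structure. First, I would show that $H$ is a Yetter-Drinfel'd module over itself with right action the adjoint action $g \bract h = S(h_{(1)}) g h_{(2)}$ and right coaction the coproduct $\Delta$. That $\bract$ defines a right action is a standard antipode calculation, and $\Delta$ being a right coaction is coassociativity. The Yetter-Drinfel'd condition~(\ref{ydcondition2}) then follows from a direct Sweedler computation: using $\Delta \circ S(a) = S(a_{(2)}) \otimes S(a_{(1)})$,
\begin{equation*}
	\Delta(g \bract h) =
	S(h_{(2)}) g_{(1)} h_{(3)} \otimes S(h_{(1)}) g_{(2)} h_{(4)}
	= (g_{(1)} \bract h_{(2)}) \otimes S(h_{(1)}) g_{(2)} h_{(3)},
\end{equation*}
which is precisely~(\ref{ydcondition2}) applied to $M=H$.

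Second, I would observe that $k\cdot 1 \subset H$ is a Yetter-Drinfel'd submodule: it is stable under the adjoint action since $1 \bract h = S(h_{(1)})h_{(2)} = \varepsilon(h)\cdot 1$, and it is a subcomodule since $\Delta(1) = 1 \otimes 1$. Hence the quotient $H/k\cdot 1$ inherits a Yetter-Drinfel'd structure over $H$.

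Third, I would transport this structure to $\ker\varepsilon$ via the canonical linear isomorphism $H/k\cdot 1 \cong \ker\varepsilon$ coming from the splitting $h \mapsto h - \varepsilon(h)\cdot 1$. The transferred action is just the restriction of $\bract$ to $\ker\varepsilon$, which is already well-defined there since $\varepsilon(x \bract h) = \varepsilon(x)\varepsilon(h) = 0$ for $x \in \ker\varepsilon$. The transferred coaction sends $h \in \ker\varepsilon$ to
\begin{equation*}
	\tilde\Delta(h) = (h_{(1)} - \varepsilon(h_{(1)})\cdot 1) \otimes h_{(2)} = h_{(1)} \otimes h_{(2)} - 1 \otimes h,
\end{equation*}
matching the displayed formula; counitality $(\mathrm{id}\otimes\varepsilon)\tilde\Delta(h) = h$ and landing in $\ker\varepsilon \otimes H$, i.e.~$(\varepsilon \otimes \mathrm{id})\tilde\Delta(h) = 0$, are then immediate. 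The only non-routine point is the Sweedler verification of the Yetter-Drinfel'd compatibility for $H$ over itself in the first paragraph; everything else is bookkeeping.
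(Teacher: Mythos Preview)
Your argument is correct. The paper states this proposition without proof, treating it as a standard fact, so there is nothing to compare against; your realisation of $\ker\varepsilon$ as the quotient Yetter--Drinfel'd module $H/k\!\cdot\!1$ (with $H$ carrying the adjoint action and the coproduct as coaction) is a clean and entirely valid way to supply the missing details, and your Sweedler verification of condition~(\ref{ydcondition2}) for $(H,\bract,\Delta)$ is accurate.
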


One can view $ \mathrm{ker}\, \varepsilon$ as a
bicomodule with respect to the trivial left coaction $h
\mapsto 1 \otimes h$, and then the inclusion map $\iota
:\mathrm{ker}\, \varepsilon \rightarrow H$ is a
coderivation. This is universal in the sense that every
coderivation factors through $ \iota $:

\begin{lemma}\label{universalproperty}
Let $H$ be a bialgebra, 
$M$ be an $H$-bicomodule, and $f : M \rightarrow H$ be
a coderivation. 
\begin{enumerate}
\item We have 
$ \mathrm{im}\,f \subseteq \mathrm{ker}\,
\varepsilon $.
\item The restriction of $f$ to $\tilde f : {}^\mathrm{inv}M
\rightarrow \mathrm{ker}\, \varepsilon$ is
right $H$-colinear with respect to the coaction 
$\tilde\triangle$ on $ \mathrm{ker}\, \varepsilon$.
\item If $M$ is a tetramodule and $f$ is $H$-bilinear, then
$\tilde f$ is a morphism of
Yetter-Drinfel'd modules. 
\end{enumerate}
\end{lemma}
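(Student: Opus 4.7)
The plan is to dispatch the three parts by directly unpacking the coderivation condition in Sweedler notation, where the bicomodule structure on $M$ is written $m \mapsto m_{(-1)} \otimes m_{(0)}$ for the left coaction and $m \mapsto m_{(0)} \otimes m_{(1)}$ for the right coaction, and the coderivation identity reads
\begin{equation*}
\Delta(f(m)) = f(m_{(0)}) \otimes m_{(1)} + m_{(-1)} \otimes f(m_{(0)}).
\end{equation*}

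For part (1), I would simply apply $\varepsilon \otimes \varepsilon$ to this identity. Using $(\varepsilon \otimes \varepsilon)\circ \Delta = \varepsilon$ on $H$ together with the counit axioms $\varepsilon(m_{(-1)})m_{(0)} = m = \varepsilon(m_{(1)})m_{(0)}$ for the bicomodule structure, the left-hand side becomes $\varepsilon(f(m))$ while the right-hand side collapses to $2\varepsilon(f(m))$. Hence $\varepsilon(f(m)) = 0$, i.e. $\mathrm{im}\, f \subseteq \mathrm{ker}\,\varepsilon$.

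For part (2), I would first observe that the commutativity of the two coactions on a bicomodule forces $\Delta_R$ to restrict to a map ${}^{\rm inv}M \to {}^{\rm inv}M \otimes H$, so that $\tilde f(m_{(0)}) \otimes m_{(1)}$ is well-defined for $m \in {}^{\rm inv}M$. Specialising the coderivation identity by $m_{(-1)} \otimes m_{(0)} = 1 \otimes m$ produces
\begin{equation*}
\Delta(f(m)) = f(m_{(0)}) \otimes m_{(1)} + 1 \otimes f(m),
\end{equation*}
and subtracting the term $1 \otimes f(m)$ from both sides yields precisely $\tilde\triangle(\tilde f(m)) = \tilde f(m_{(0)}) \otimes m_{(1)}$, which is the required right $H$-colinearity.

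For part (3), assume in addition that $M$ is a tetramodule and $f$ is $H$-bilinear. Under the equivalence with Yetter-Drinfel'd modules, the right $H$-action transported to ${}^{\rm inv}M$ is $m \bract h = S(h_{(1)}) m h_{(2)}$, using the bimodule actions on $M$; I would briefly verify, using that the coactions on a tetramodule are bimodule maps, that this action indeed preserves ${}^{\rm inv}M$, matching exactly the right adjoint action on $\mathrm{ker}\,\varepsilon$. The $H$-bilinearity of $f$ then gives
\begin{equation*}
\tilde f(m \bract h) = f(S(h_{(1)}) m h_{(2)}) = S(h_{(1)}) f(m) h_{(2)} = \tilde f(m) \bract h,
\end{equation*}
which combined with part (2) expresses that $\tilde f$ is a morphism of Yetter-Drinfel'd modules. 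No serious obstacle is anticipated; the only point that is not entirely automatic is identifying the right action on ${}^{\rm inv}M$ coming from the tetramodule/Yetter-Drinfel'd equivalence, after which the compatibility is essentially immediate from the bilinearity and antipode axioms.
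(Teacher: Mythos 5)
Your proposal is correct and follows essentially the same route as the paper's proof: apply $\varepsilon\otimes\varepsilon$ to the coderivation identity for (1), specialise to left-invariant elements and subtract $1\otimes f(m)$ for (2), and use $H$-bilinearity together with the right adjoint actions for (3). The only difference is that you explicitly check that the right coaction restricts to ${}^{\mathrm{inv}}M$, a point the paper leaves implicit, and your Sweedler bookkeeping for the right-coaction term is actually cleaner than the paper's.
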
   
     
\begin{proof}
(1) Applying $ \varepsilon \otimes \varepsilon $ to the
coderivation condition
$$
	(f(m))_{(1)}\otimes(f(m))_{(2)}\,=
	\,m_{(-1)}\otimes f(m_{(0)})
	+m_{(0)}\otimes f(m_{(1)})
$$
yields $\varepsilon(f(m))=2 \varepsilon (f(m))$, so
$ \varepsilon (f(m))=0$. 

(2) For left invariant $m \in M$, we have 
$m_{(-1)} \otimes m_{(0)}=1 \otimes m$, so subtracting 
$1 \otimes f(m)$ from the
coderivation condition yields 
$$
	\tilde \triangle (f(m))\,=\,(f(m))_{(1)}\otimes(f(m))_{(2)}-
	1 \otimes f(m)\,=
	\,
	m_{(0)}\otimes f(m_{(1)}).
$$

(3) The right action on ${}^\mathrm{inv} M$ respectively $
\mathrm{ker}\, \varepsilon $ is obtained from the
bimodule structure on $M$ respectively $H$ by passing
to the right adjoint actions, so 
$\tilde f(m \bract h)=f(S(h_{(1)})mh_{(2)})=S(h_{(1)} 
	f(m) h_{(2)}=\tilde f(m) \bract h$.  
\end{proof}

\begin{bemerkung}\label{fodc}
In Remark~\ref{fodc} we mentioned that
first order bicovariant differential calculi in the
sense of Woronowicz are
formally dual to certain bialgebras in $\LM$. We
can explain this now in more detail:
given a first order
bicovariant differential calculus  
over a Hopf algebra $A$, that is, a bicolinear derivation 
$d : A \rightarrow \Omega$ with values in a
tetramodule $ \Omega $ which is minimal in the
sense that $\Omega=\mathrm{span}_k\{a db \mid a,b \in
A\}$, one defines 
$$
	\mathcal{R}_{(\Omega,d)}:=
	\{a \in \mathrm{ker}\, \varepsilon \mid 
	S(a_{(1)}) da_{(2)}=0\}.
$$ 
It turns out that $(\Omega,d)
\mapsto \mathcal{R}_{(\Omega,d)}$ 
establishes a one-to-one
correspondence between first order bicovariant
differential calculi and right ideals in $
\mathrm{ker}\, \varepsilon $ that are invariant under
the right adjoint coaction $a \mapsto a_{(2)} \otimes
S(a_{(1)})a_{(3)}$ of $A$, see \cite[Proposition~14.1
and Proposition~14.7]{KliSch}. When
$A=k[G]$ is the coordinate ring of an affine algebraic
group, $\Omega$ are the  K\"ahler
differentials and $da$ is the differential of a regular
function $a$, then $\mathcal{R}_{(\Omega,d)}$ is just 
$ (\mathrm{ker}\, \varepsilon)^2$ and 
$ \mathrm{ker}\, \varepsilon /
\mathcal{R}_{(\Omega,d)}$ is the cotangent space of $G$
in the unit element.     

Motivated by this example, one introduces the \emph{quantum
tangent space} 
$$
	\mathcal{T}_{(\Omega,d)}:=\{\phi \in A^* \mid 
\phi (1)=0,\phi(a)=0 \,\forall\,a \in
\mathcal{R}_{(\Omega,d)}\},
$$ 
where  
$A^*=\mathrm{Hom}_k(A,k)$ denotes the dual algebra of
$A$. Provided that $ \Omega $ is finite-dimensional 
in the sense that $\dim_k {}^\mathrm{inv} \Omega <
\infty$, the quantum tangent space  
belongs to the Hopf dual $H:=A^\circ $
of $A$ and uniquely characterises the calculus up to
isomorphism, see \cite[Proposition~14.4]{KliSch} and
the subsequent discussion. By definition,
$\mathcal{T}_{(\Omega,d)}$ is then a subspace of 
$ \mathrm{ker}\, \varepsilon \subset H$ which is by   
\cite[(14)]{KliSch} invariant
under the right coaction $\tilde\triangle$ and 
as a consequence of \cite[Proposition~14.7]{KliSch} it
is also invariant under the right adjoint action of $H$
on itself; in other words, the quantum tangent space is
a Yetter-Drinfel'd submodule of $ \mathrm{ker}\,
\varepsilon $, and if we equip 
$M:=H \otimes \mathcal{T}_{(\Omega,d)}$ with the
corresponding $H$-tetramodule structure we can extend
the inclusion of the quantum tangent space into $
\mathrm{ker}\, \varepsilon $ to a Hopf algebra 
object $f : M \rightarrow H$ in $\LM$. 
Thus first order bicovariant differential
calculi should be viewed as structures dual to Hopf algebra
objects $f : M \rightarrow H$ in $\LM$ for which the induced
map $\tilde f$ is injective.  
\end{bemerkung}

\section{Braided Leibniz algebras}
\label{generalised}

The definition of a Leibniz algebra extends
straightforwardly from ${\tt Vect}$ 
to other additive braided monoidal
categories \cite{Leb}. In this
final section we discuss the construction of such 
generalised Leibniz algebras from Hopf algebra objects in $\LM$
which is the main objective of our paper.   

\subsection{Definition}
The following structure is meant to generalise 
both racks and Leibniz algebras in their role of
domains of objects in $\LM$:

\begin{definition}
A \emph{braided Leibniz algebra} is a  
vector space  $M$ together with linear maps
\begin{equation*}
	\lhd : M \otimes M \rightarrow M,\quad
	x \otimes y \mapsto x \lhd y
\end{equation*}
and
\begin{equation*}
	\tau : M \otimes M \rightarrow M \otimes M,\quad
	x \otimes y \mapsto y_{\langle 1 \rangle } 
	\otimes x_{\langle 2 \rangle }
\end{equation*}
satisfying
\begin{equation}\label{leibniz}
	(x \lhd y) \lhd z=
	x \lhd (y \lhd z)+
	(x \lhd z_{\langle 1 \rangle }) \lhd y_{\langle 2
\rangle }  \quad
	\forall x,y,z \in M.
\end{equation}
\end{definition}
\begin{bemerkung}
We do not assume that $ \tau $ maps elementary tensors 
to elementary tensors, the notation $ y_{\langle 1
\rangle }\otimes x_{\langle 2 \rangle }$ 
should be understood
symbolically like Sweedler's notation $ \triangle(h)=h_{(1)}
\otimes h_{(2)}$ for the coproduct of an element $h$ of a 
coalgebra $H$ which is also in general not an elementary tensor. 
\end{bemerkung}
\begin{bemerkung}
It is natural to ask for $ \tau $ 
to satisfy the braid relation (Yang-Baxter equation), so that
$M$ is just a braided Leibniz algebra as studied
e.g.~in~\cite{Leb}. 
Instead of assuming this a priori we 
rather characterise this 
case in the examples that we study below, and later we
investigate the consequences of this condition. 
\end{bemerkung}

\begin{beispiel}\label{classical}
When  
$\tau$ is the tensor flip, 
$y_{\langle 1 \rangle} 
\otimes x_{\langle 2 \rangle }=y
\otimes x$, we recover 
Definition~\ref{defi_leibni} from Section~\ref{laibniz}  
with $x \lhd y=:[x,y]$, as the Leibniz rule 
(\ref{leibniz}) becomes the (right) Jacobi identity in the form
\begin{equation*}
	[[x,y],z]=
	[x,[y,z]]+
	[[x,z],y].
\end{equation*}
\end{beispiel}

\subsection{Leibniz algebras from modules-comodules}
The following proposition allows one to construct Leibniz
algebras from modules-comodules:
\begin{proposition}
Let $M$ be a right module and a 
right comodule over a bialgebra $H$,
$q : M \rightarrow H$
be a $k$-linear map, and define
\begin{equation*}
	x \lhd y:=xq(y).
\end{equation*}
Then $(M,\tau,\lhd)$ is a braided Leibniz algebra with respect
to 
$$ 
	\tau : M \otimes M \rightarrow M \otimes M,\quad
	x \otimes y \mapsto y_{(0)} \otimes xy_{(1)}
$$ from (\ref{ydbraiding}) provided that
\begin{equation}\label{equiv}
	h_{(1)} q(xh_{(2)})=q(x)h	
\end{equation}
and
\begin{equation}\label{infcoderv}
	q(x)_{(1)} \otimes q(x)_{(2)} =
	1 \otimes q(x)+q(x_{(0)}) \otimes x_{(1)}
\end{equation}
holds for all $x \in M$ and $h \in H$.
\end{proposition}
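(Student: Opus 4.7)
The plan is a direct Sweedler-notation verification of the Leibniz identity (\ref{leibniz}); no use of the braid relation is needed because the definition of a braided Leibniz algebra in this paper does not require $\tau$ to satisfy Yang--Baxter.

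First I would unpack the three terms. Using $x \lhd y = xq(y)$ one has
$(x \lhd y) \lhd z = xq(y)q(z)$ and $x \lhd (y \lhd z) = xq(yq(z))$. For the mixed term I read off from
$\tau(y \otimes z)=z_{(0)} \otimes y z_{(1)}$ that $z_{\langle 1 \rangle } = z_{(0)}$ and $y_{\langle 2 \rangle} = y z_{(1)}$, so that
$(x \lhd z_{\langle 1 \rangle }) \lhd y_{\langle 2 \rangle } = xq(z_{(0)})q(yz_{(1)})$. Thus the identity to be proved reduces to
\begin{equation*}
	xq(y)q(z) = xq(yq(z)) + xq(z_{(0)})q(yz_{(1)}) \qquad \forall\, x,y,z \in M.
\end{equation*}

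The right-hand factor $x$ plays no role, so it suffices to establish the purely $H$-valued identity obtained by dropping it. I would first apply (\ref{equiv}) to $y$ with $h := q(z)$, which gives
$q(y)q(z) = q(z)_{(1)} q\bigl(y\,q(z)_{(2)}\bigr)$.
Next I would substitute the infinitesimal coderivation property (\ref{infcoderv}),
$q(z)_{(1)} \otimes q(z)_{(2)} = 1 \otimes q(z) + q(z_{(0)}) \otimes z_{(1)}$,
into the right-hand side of the previous equation; concretely, applying the $k$-bilinear pairing $(\alpha,\beta) \mapsto \alpha\,q(y\beta)$ to both sides of (\ref{infcoderv}) yields
\begin{equation*}
	q(z)_{(1)} q\bigl(y\,q(z)_{(2)}\bigr) = q(yq(z)) + q(z_{(0)}) q(yz_{(1)}).
\end{equation*}
Combining the two steps gives $q(y)q(z) = q(yq(z)) + q(z_{(0)})q(yz_{(1)})$, and left-multiplying by $x$ produces exactly the required equality.

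The only point that requires a little care is the bookkeeping when feeding (\ref{infcoderv}) into the expression from (\ref{equiv}): one needs to treat the first leg of (\ref{infcoderv}) as an element of $H$ (acting by left multiplication) and the second leg as the argument of the $H$-action on $M$ inside $q(y\,\cdot\,)$. Apart from this, the argument is essentially a two-line computation and I see no genuine obstacle.
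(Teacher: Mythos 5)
Your proof is correct and follows essentially the same route as the paper: apply the equivariance condition (\ref{equiv}) with $h=q(z)$ to rewrite $q(y)q(z)$ as $q(z)_{(1)}q\bigl(y\,q(z)_{(2)}\bigr)$, then substitute the coderivation identity (\ref{infcoderv}) to split this into the two required terms. The only cosmetic difference is that you strip off the left factor $x$ and verify the identity in $H$ acting on $M$, while the paper carries $x$ through the computation; the content is identical.
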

\begin{proof}
Straightforward computation gives
\begin{eqnarray*}
 (x \lhd y) \lhd z
&=& (xq(y))q(z)=x (q(y)q(z))\\
&=& x (q(z)_{(1)} q(yq(z)_{(2)}))\\
&=& x q(yq(z)) + x q(z_{(0)}) q(yz_{(1)})\\
&=& x \lhd (y \lhd z)+
	(x \lhd z_{\langle 1 \rangle}) \lhd y_{\langle 2
\rangle}
\end{eqnarray*}
as had to be shown.
\end{proof}
\begin{bemerkung}
Observe that applying $ \mathrm{id}_H \otimes \varepsilon $ 
to (\ref{infcoderv}) implies 
\begin{equation*}
	q(x) = \varepsilon (q(x)) + q(x),
\end{equation*}
so this condition necessarily requires 
$ \mathrm{im}\, q \subseteq \mathrm{ker}\, \varepsilon \subset H$. 
If $H$ is a Hopf algebra, then (\ref{equiv}) 
is equivalent to the right $H$-linearity of $q$
with respect to the right adjoint action
of $H$ on $ \mathrm{ker}\, \varepsilon $.
Furthermore, the condition (\ref{infcoderv}) can be stated 
also as saying that $ q : M \rightarrow \mathrm{ker}\,
\varepsilon $ is right $H$-colinear with respect to the
right coaction $ \tilde\triangle $ on $\mathrm{ker}\,
\varepsilon $ from
Section~\ref{universal_codifferential}.
\end{bemerkung}

Thus we can restate the above proposition also as
follows:
   
\begin{corollary}\label{korollar}
Let $M$ be a right module and  
right comodule over a Hopf algebra $H$ and 
$q : M \rightarrow \mathrm{ker}\, \varepsilon$
be an  $H$-linear and $H$-colinear map. Then
\begin{equation*}
	\tau (x \otimes y):=y_{(0)} \otimes xy_{(1)},\quad
	x \lhd y:=xq(y)
\end{equation*}
turns $M$ into a braided Leibniz algebra.
\end{corollary}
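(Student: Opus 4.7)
The plan is to deduce this corollary directly from the preceding proposition by verifying that its two technical hypotheses (\ref{equiv}) and (\ref{infcoderv}) are nothing more than the $H$-linearity and $H$-colinearity of $q$ with respect to the particular structures on $\mathrm{ker}\,\varepsilon$ recalled in Section~\ref{universal_codifferential}. Since the image of $q$ already lies in $\mathrm{ker}\,\varepsilon$ by hypothesis, the ``necessary condition'' remark following the proposition is automatic, and all that remains is to translate the two structural conditions on $q$ into the Sweedler-style identities required.

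First I would verify (\ref{infcoderv}). The right coaction $\tilde\triangle$ on $\mathrm{ker}\,\varepsilon$ is by definition
\begin{equation*}
\tilde\triangle(k)=k_{(1)}\otimes k_{(2)}-1\otimes k,
\end{equation*}
so $H$-colinearity of $q$ in the form $\tilde\triangle(q(x))=q(x_{(0)})\otimes x_{(1)}$ is literally the identity
\begin{equation*}
q(x)_{(1)}\otimes q(x)_{(2)}=1\otimes q(x)+q(x_{(0)})\otimes x_{(1)},
\end{equation*}
which is (\ref{infcoderv}). This step is entirely formal.

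Next I would check (\ref{equiv}). Right $H$-linearity of $q$ with respect to the right adjoint action on $\mathrm{ker}\,\varepsilon$ reads $q(xh)=S(h_{(1)})\,q(x)\,h_{(2)}$. Substituting $h_{(2)}$ for $h$ and then left-multiplying by $h_{(1)}$, extended Sweedler notation gives
\begin{equation*}
h_{(1)}\,q(xh_{(2)})=h_{(1)}\,S(h_{(2)})\,q(x)\,h_{(3)}.
\end{equation*}
The antipode axiom applied to the first two Sweedler factors yields $h_{(1)}S(h_{(2)})\otimes h_{(3)}=1\otimes h$, so the right-hand side collapses to $q(x)h$, which is exactly (\ref{equiv}). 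The only point that requires a moment of care is the use of coassociativity to ensure that the antipode identity applies despite the presence of $q(x)$ between the factors; this is handled cleanly by first computing $h_{(1)}S(h_{(2)})\otimes h_{(3)}$ in $H\otimes H$ and then inserting $q(x)$ between the two resulting tensor slots.

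With (\ref{equiv}) and (\ref{infcoderv}) in hand, the preceding proposition applies directly and produces the braided Leibniz algebra structure on $M$ with $\tau(x\otimes y)=y_{(0)}\otimes xy_{(1)}$ and $x\lhd y=xq(y)$. The main (and only real) obstacle is bookkeeping in the antipode computation of step three; once the extended Sweedler manipulation is written out correctly, no further work is needed.
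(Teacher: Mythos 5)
Your proposal is correct and follows essentially the same route as the paper: the corollary is obtained by translating the $H$-linearity of $q$ (with respect to the right adjoint action on $\mathrm{ker}\,\varepsilon$) and the $H$-colinearity of $q$ (with respect to $\tilde\triangle$) into conditions (\ref{equiv}) and (\ref{infcoderv}) and then invoking the preceding proposition, exactly as the remark before the corollary indicates. Your antipode computation $h_{(1)}S(h_{(2)})\otimes h_{(3)}=1\otimes h$ and the formal identification of $\tilde\triangle$-colinearity with (\ref{infcoderv}) are both correct.
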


\subsection{Leibniz algebras from Hopf algebra objects in
$\LM$}
Altogether, the above results provide a proof of our main theorem:


\begin{proof}[Proof of Theorem~\ref{main}]
From the description of Hopf algebra objects in the category of 
linear maps ${\mathcal L}{\mathcal M}$
in Section \ref{category_LM}, it follows that 
$f : M\to H$ is the data of a Hopf algebra $H$, a
tetramodule $M$ and a morphism of bimodules $f$ 
which is also a coderivation. 
Hence Lemma~\ref{universalproperty} proves the first
part of the theorem. Now  
Corollary \ref{korollar} applied to $q:=\tilde f$ 
yields the structure of 
a braided Leibniz algebra on $\,^{\rm inv}M$.
\end{proof}    

Now we see that classical Leibniz algebras can be viewed as a
special case of the constructions from this subsection:

\begin{beispiel}   \label{example_Leibniz}
Let $(\mathfrak{g},[\cdot,\cdot])$ 
be a (right) Leibniz algebra in the 
category of $k$-vector spaces with the flip as braiding as
in Example~\ref{classical}. We have recalled in Section
\ref{algebraic_objects} how to
regard $\mathfrak{g}$ as a Lie algebra object in ${\mathcal L}{\mathcal M}$, and 
in Section~\ref{univ_env} how to associate
to it its universal enveloping algebra, 
which is a Hopf algebra object 
$\phi : U \mathfrak{g}_\mathrm{Lie} \otimes
\mathfrak{g} \rightarrow U \mathfrak{g}_\mathrm{Lie}$  
in ${\mathcal L}{\mathcal M}$. The  
canonical quotient map 
$\pi : \mathfrak{g}  \rightarrow \mathfrak{g}_{\rm
Lie}$ is given by $ \pi (x)=\phi(1 \otimes x)$.

Recall now from Example~\ref{einhuellende} that 
$ \mathfrak{g} $ is recovered as 
${}^\mathrm{inv} (U \mathfrak{g}_\mathrm{Lie} \otimes
\mathfrak{g})$ (with trivial right coaction), and in
this sense, $ \pi $ coincides with $\tilde \phi$. The 
Yetter-Drinfel'd braiding thus becomes the tensor
flip, and the generalised Leibniz bracket 
$\lhd$ on $\mathfrak{g}$ is the original one.  

This generalises the corresponding example for Lie algebras \cite{Maj}  p. 63, \cite{CCES} 
Proposition 3.5, to Leibniz algebras.  
\end{beispiel}

The above example should be viewed as an infinitesimal 
variant of the following one: 

\begin{beispiel}\label{beispielb}
Let $X$ be a finite rack and $G:=\mathrm{As}(X)$ be
its associated group \cite{FenRou}. Then $p : X\to G$ is
an augmented rack, see Remark~\ref{associated_group}
above. 
We have seen in
Proposition \ref{prop_aug_rack} that the linearisation $p:kX\to kG$ 
is not a Hopf algebra object in
${\mathcal L}{\mathcal M}$, so we cannot apply 
Theorem~\ref{main} in this situation in order to obtain a
Leibniz algebra structure on $kX$.  

However, recall from Example~\ref{augmented_rack} that 
$kX$ is by the very
definition of an augmented rack a
Yetter-Drinfel'd module over the group algebra $kG$,
and we obtain a
morphism $q : kX \rightarrow \mathrm{ker}\, \varepsilon
\subset kG$, $x \mapsto p(x)-1$ of
Yetter-Drinfel'd modules. Now we can apply
Corollary~\ref{korollar} to obtain a braided Leibniz
algebra structure $x \lhd y=x(p(y)-1)$. 
This construction works for all augmented racks, so
augmented racks can be converted into special examples
of braided Leibniz algebras.
In this way, we recover \cite[Proposition~3.1]{CCES}.  
\end{beispiel}

\begin{example}\label{fodcb}
If $\mathcal{T} \subset H:=A^\circ$ is the quantum tangent
space of a finite-dimensional first order 
bicovariant differential calculus over a Hopf algebra
$A$ 
and $f : H \otimes \mathcal{T} \rightarrow H$ is the
corresponding Hopf algebra object in $\LM$
(recall Remark~\ref{fodca}), then the generalised
Leibniz  
bracket from Theorem~\ref{main} becomes  
$$
	x \lhd y=x \tilde f(y)=S(y_{(1)})xy_{(2)}.
$$ 
That is, the generalised Leibniz algebra structure is
precisely the quantum Lie algebra structure 
of $\mathcal{T}$, compare
\cite[Section~14.2.3]{KliSch}.
\end{example}
\begin{example}
We end by explicitly computing the R-matrix
representing the Yetter-Drinfel'd
braiding from Example \ref{example_Leibniz} for the 
Hei\-sen\-berg-Voros algebra ${\mathfrak g}$. 
This is the $3$-dimensional Leibniz algebra spanned by $x,y,z$ 
such that the only non-trivial brackets are
$$
	[x,x]\,=\,z,\,\,\,\,\,\,\,\,[y,y]\,=\,z,\,\,\,\,\,\,\,[x,y]\,=\,z,\,\,\,\,\,\,\,[y,x]\,=\,-z
$$

This Leibniz algebra can also be described as a $1$-dimensional central extension of the 
abelian $2$-dimensional Lie/Leibniz algebra, but rather than being antisymmetric, the cocycle 
has a symmetric and an antisymmetric part (in contrast to the Heisenberg Lie algebra).  

The shelf structure on ${\mathfrak g}$ is given for constants $a,b,c,d,a',b',c',d'\in k$ by
\begin{eqnarray}
&& (a+bx+cy+dz)\lhd(a'+b'x+c'y+d'z)\nonumber\\
&=& aa'+a'bx+a'cy+z(a'd+bb'+bc'-cb'+cc').\nonumber
\end{eqnarray}

One computes the R-matrix to be\\

$$\left(\begin{array}{cccccccccccccccc} 
1 & 0 & 0 & 0 & 0 & 0 & 0 & 0 & 0 & 0 & 0 & 0 & 0 & 0 & 0 & 0 \\
0 & 0 & 0 & 0 & 1 & 0 & 0 & 0 & 0 & 0 & 0 & 0 & 0 & 0 & 0 & 0 \\
0 & 0 & 0 & 0 & 0 & 0 & 0 & 0 & 1 & 0 & 0 & 0 & 0 & 0 & 0 & 0 \\
0 & 0 & 0 & 0 & 0 & 0 & 0 & 0 & 0 & 0 & 0 & 0 & 1 & 0 & 0 & 0 \\
0 & 1 & 0 & 0 & 0 & 0 & 0 & 0 & 0 & 0 & 0 & 0 & 0 & 0 & 0 & 0 \\
0 & 0 & 0 & 0 & 0 & 1 & 0 & 0 & 0 & 0 & 0 & 0 & 0 & 0 & 0 & 0 \\
0 & 0 & 0 & 0 & 0 & 0 & 0 & 0 & 0 & 1 & 0 & 0 & 0 & 0 & 0 & 0 \\
0 & 0 & 0 & 0 & 0 & 0 & 0 & 0 & 0 & 0 & 0 & 0 & 0 & 1 & 0 & 0 \\
0 & 0 & 1 & 0 & 0 & 0 & 0 & 0 & 0 & 0 & 0 & 0 & 0 & 0 & 0 & 0 \\
0 & 0 & 0 & 0 & 0 & 0 & 1 & 0 & 0 & 0 & 0 & 0 & 0 & 0 & 0 & 0 \\
0 & 0 & 0 & 0 & 0 & 0 & 0 & 0 & 0 & 0 & 1 & 0 & 0 & 0 & 0 & 0 \\
0 & 0 & 0 & 0 & 0 & 0 & 0 & 0 & 0 & 0 & 0 & 0 & 0 & 0 & 1 & 0 \\
0 & 0 & 0 & 1 & 0 & 1 &-1 & 0 & 0 & 1 & 1 & 0 & 0 & 0 & 0 & 0 \\
0 & 0 & 0 & 0 & 0 & 0 & 0 & 1 & 0 & 0 & 0 & 0 & 0 & 0 & 0 & 0 \\
0 & 0 & 0 & 0 & 0 & 0 & 0 & 0 & 0 & 0 & 0 & 1 & 0 & 0 & 0 & 0 \\
0 & 0 & 0 & 0 & 0 & 0 & 0 & 0 & 0 & 0 & 0 & 0 & 0 & 0 & 0 & 1 
\end{array}\right)$$\\
 
Observe the 13th line. This matrix does not square to $1$. 
\end{example}





\end{document}